\newtheorem{thm}{Theorem}[section]
\newtheorem{theorem}[thm]{Theorem}
\newtheorem{corollary}[thm]{Corollary}
\newtheorem{problem}[thm]{Problem}
\newtheorem{lemma}[thm]{Lemma}
\newtheorem{proposition}[thm]{Proposition}
\newtheorem{definition}[thm]{Definition}
\theoremstyle{remark}
\newcommand{\norm}[1]{\|#1\|}
\newcommand{\cH}{\mathcal H}
\newcommand{\cF}{\mathcal F}
\newcommand{\cG}{\mathcal G}
\title{ the Paulsen Problem in operator theory}
\author[J. Cahill and P.G. Casazza 
 ]{Jameson Cahill and Peter G. Casazza}
\address{Department of Mathematics, University
of Missouri, Columbia, MO 65211-4100}
\thanks{The authors were supported by NSF DMS 1008183,
DTRA/NSF 1042701, AFOSR F1ATA00183G003}
\email{jameson.cahill@gmail.com; casazzap@missouri.edu}
\begin{document}

\begin{abstract}
The {\it Paulsen Problem} in Hilbert space frame theory has proved to
be one of the most intractable problems in the field.  We will help
explain why by showing that this problem is equivalent to a fundamental,
deep problem in operator theory.  This answers a question posed
by Bodmann and Casazza. 
We will also give generalizations
of these problems and we will spell out exactly the {\it complementary
versions} of the problem.
\end{abstract}

\maketitle


\section{Introduction}

The {\it Paulsen Problem} has proved to be one of the most intractable problems
in frame theory (See Section \ref{sec2} for definitions):

\begin{problem}[Paulsen Problem]
Find the function $h(\epsilon,M,N)$ so that for any $\epsilon$-nearly equal norm,
$\epsilon$-nearly Parseval frame $\{f_i\}_{i=1}^N$ for a $M$-dimensional Hilbert
space $\cH_M$, there is an equal norm Parseval frame $\{g_i\}_{i=1}^M$ 
for $\cH_M$ satisfying:
\[ \sum_{i=1}^N \|f_i-g_i\|^2 \le h(\epsilon,M,N).\]
\end{problem}

A fundamental question here is whether the function $h(\epsilon,M,N)$
actually depends upon $N$.  We have no examples showing this at this time, although
it is known that this function must depend upon $M$.  For all examples we
know at this time, we have
\[ f(\epsilon,M,N) \le 16 \epsilon M.\]

For a dozen years no progress at all was made on the Paulsen Problem.  Recently,
some progress has been made on the problem.  First, Bodmann and Casazza 
\cite{BC} used differential equations to give an estimate for the function $h(\epsilon,M,N)$.
This paper leaves open the case where $M,N$ are not
relatively prime.  Using gradient descent of the {\it frame potential}, Casazza, Fickus
and Mixon \cite{CF} gave a completely different solution for the Paulsen problem
which works in the case where $M,N$ are relatively prime.    
The estimates in these two papers seem to be quite far from optimal
since it is on the order of $M^2N^9\epsilon$ and best evidence indicates the
answer should be of the form $cM\epsilon$ or at worst $cN\epsilon$.

We will show why the Paulsen Problem has proved to so intractable by showing that
it is equivalent to a fundamental, deep problem in operator theory.  The fact
that there must be a connection
between these two problems was first observed in \cite{BC}.  In effect, we are answering
a problem left open in that paper.

\begin{problem}[Projection Problem]\label{prob1}
Let $\cH_N$ be an $N$-dimensional Hilbert space with orthonormal basis $\{e_i\}_{i=1}^N$.
Find the function $g(\epsilon,M,N)$ satisfying the following.  If $P$ is a projection of rank $M$
on $\cH_N$ satisfying
\[ (1-\epsilon)\frac{M}{N} \le \|Pe_i\|^2 \le (1+\epsilon)\frac{M}{N},
\mbox{ for all } i=1,2,\ldots,N,\]
then there is a projection $Q$ with $\|Qe_i\|^2 = \frac{M}{N}$ for all $i=1,2,\ldots,N$ 
satisfying
\[ \sum_{i=1}^N \|Pe_i-Qe_i\|^2 \le g(\epsilon,M,N).\]
\end{problem}

In \cite{BC}, it is shown that the Hilbert-Schmidt distance between an $\epsilon$-nearly
constant diagonal projection and its closest constant diagonal projection is less
than or equal to $2f(\epsilon,M,N)$.  Here, we will show the full equivalence (up to
a factor of 4) of $f(\epsilon,M,N)$ and $g(\epsilon,M,N)$.
Analyzing the diagonal properties of projections has a long history. 
Kadison \cite{K1,K2} gave a complete characterization of the diagonals
of projections for bothe the finite and infinite dimensional case.   
Analogous results on projections in type $II_1$ factors
was given by Argerami and Massey \cite{AM}.  For the more general problem of
characterizing the diagonals of the unitary orbit of a self-adjoint operator,
there is much more literature.  This is equivalent in frame theory to characterizing
the sequences which occur as the norms of a frame with a specified frame operator.
We refer the reader to \cite{AMRS,AM2,A,AK,BJ,CL1,CL,CFLKT,H,J,KL,KW,N,S}
for a review of the work in this direction.

We will also consider the {\it Naimark complement} of nearly equal norm 
Parseval frames.  We will show that the Paulsen function for a Parseval frame
and its Naimark complement have a natural relationship.  As a consequence of this,
we will see that the Paulsen Problem only has to be solved for frames with a
small number of elements relative to the dimension of the space.  In particular,
we only have to deal with the case of $N\le 2M$.

This paper is organized as follows.  In Section 2 we give the requisite background
needed from frame theory.  In Section 3 we will prove a sequence of results which
give an exact relationship between nearly equal norm Parseval frames for 
$\cH_M$ and the distance between orthogonal projections $P,Q$ on $\cH_N$
of rank $M$.  As a tool here, we will relate our quantities
to the {\it principal angles} between
subspaces of a Hilbert space and the {\it chordal distance} between subspaces of
a Hilbert space.  In Section 4 we give an exact calculation relating the Paulsen 
Problem function and the function in the Projection Problem.  Section 5 contains
generalizations of both problems and in Section 6 we will relate the Paulsen
Problem functions for a frame and its Naimark complement.

\section{Frame Theory}\label{sec2}

In this section we will give a brief introduction to frame theory containing the results
used in the paper.  For the basics on frame theory see \cite{Chr}. 

\begin{definition}
A family of vectors $\{f_i\}_{i=1}^N$ in an $M$-dimensional Hilbert space $\cH_M$ is
a {\it frame} if there are constants $0<A\le B < \infty$ so that for all $f\in \cH_M$ we have
\[ A \|f\|^2 \le \sum_{i=1}^M|\langle f,f_i \rangle |^2 \le B \|f\|^2.\]
If $A=B$, this is a {\it tight frame} and if $A=B=1$, it is a {\it Parseval frame}. If there is
a constant $c$ so that $\|f_i\|=c$, for all $i=1,2,\ldots,N$ it is an {\it equal norm frame} and
if $c=1$, it is a {\it unit norm frame}.
\end{definition} 

If $\{f_i\}_{i=1}^N$ is a frame for $\cH_M$, the {\it analysis operator} of the frame is
the operator $T:\cH_M \rightarrow \ell_2(N)$ given by
\[ T(f) = \sum_{i=1}^N \langle f,f_i\rangle e_i,\]
where $\{e_i\}_{i=1}^N$ is the natural orthonormal basis of $\ell_2(N)$.  The {\it synthesis
operator} is $T^*$ and satisfies
\[ T^*\left (\sum_{i=1}^Na_ie_i \right )= \sum_{i=1}^Na_if_i.\]
The {\it frame operator} is the positive, self-adjoint invertible operator $S=T^*T$
on $\cH_M$  and satisfies
\[ S(f)=: T^*T(f) = \sum_{i=1}^N \langle f,f_i\rangle f_i.\]
A direct calculation shows that the frame $\{S^{-1/2}f_i\}_{i=1}^N$ is a Parseval frame
called the {\it canonical Parseval frame} for the frame.  Also, $\{f_i\}_{i=1}^N$ is a 
Parseval frame if and only if $S=I$.
We say that two frames $\{f_i\}_{i\in I},\{g_i\}_{i\in I}$ for $\cH$ are {\it isomorphic} if there is an
invertible operator $L$ on $\cH$ satisfying $Lf_i=g_i$, for all $i\in I$.  It is known
\cite{CKo} that two frames are isomorphic if and only if their analysis operators have the same
image, and two Parseval frames are isomorphic if and only if the isomorphism is a
unitary operator.
If $\{f_i\}_{i=1}^N$ is a frame with frame operator $S$ having eigenvalues $\{\lambda_j\}_{j=1}^M$,
then
\[ \sum_{i=1}^N \|f_i\|^2 = \sum_{j=1}^M\lambda_j.\]
So if $\{f_i\}_{i=1}^N$ is an equal norm Parseval frame then
\[ \|f_1\|^2 = \frac{1}{N}\sum_{i=1}^N\|f_i\|^2 = \frac{M}{N}.\]

We will need a distance function for frames and projections.

\begin{definition}
If $\cF = \{f_i\}_{i=1}^N$ and ${\mathcal G}=\{g_i\}_{i=1}^N$ are frames for $\cH_M$, we define
the distance between them by
\[ d(\cF,\cG) = \sum_{i=1}^N\|f_i-g_i\|^2 .\]
If $P,Q$ are projections on $\ell_2(N)$, we define
\[ d(P,Q) = \sum_{i=1}^N \|Pe_i-Qe_i\|^2,\]
where $\{e_i\}_{i=1}^N$ is the natural orthonormal basis for $\ell_2(N)$.
\end{definition}

 For the Paulsen Problem, we define:
 
\begin{definition}
A frame $\{f_i\}_{i=1}^N$ with frame operator $S$ is $\epsilon$-nearly Parseval if
\[ (1-\epsilon)I \le S \le (1+\epsilon)I.\]
The frame is $\epsilon$-nearly equal norm if
\[ (1-\epsilon)\frac{M}{N} \le \|f_i\|^2 \le (1+\epsilon)\frac{M}{N}.
\]
\end{definition}

 A reduction of the Paulsen problem to the Parseval case is done in \cite{BC}.
 
 \begin{proposition}
 If $\cF=\{f_i\}_{i=1}^N$ is an $\epsilon$-nearly Parseval frame 
for $\cH_M$ then
the Parseval frame $\cG = \{S^{-1/2}f_i\}_{i=1}^N$ satisfies
\[ d(\cF,\cG) \le M(2 - \epsilon - 2\sqrt{1-\epsilon}) \le \frac{M\epsilon^2}{4}.
\]
It is also nearly equal norm
with the bounds:
\[ \frac{(1-\epsilon)^2}{1+\epsilon} \frac{M}{N} \le \|S^{-1/2}f_i\|^2
\le \frac{(1+\epsilon)^2}{1-\epsilon}\frac{M}{N}.\] 
\end{proposition}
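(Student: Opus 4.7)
The plan is to reduce everything to a computation involving the eigenvalues of the frame operator $S$ via the trace identity
\[\sum_{i=1}^N \langle Af_i, f_i\rangle \;=\; \tr(AS),\]
which holds for any bounded operator $A$ on $\cH_M$ (an immediate consequence of $\sum_i f_if_i^* = S$). Once this is in hand, every quantity in the statement becomes a spectral expression in the eigenvalues $\lambda_1,\ldots,\lambda_M$ of $S$, and all that remains is a one-variable calculus estimate on the interval $[1-\epsilon,1+\epsilon]$.

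For the distance bound, I would apply the trace identity with $A = (I - S^{-1/2})^2$ (this operator is self-adjoint, so $\|(I - S^{-1/2})f_i\|^2 = \langle Af_i, f_i\rangle$) to get
\[d(\cF, \cG) \;=\; \tr\bigl((I - S^{-1/2})^2 S\bigr) \;=\; \tr\bigl(S - 2 S^{1/2} + I\bigr) \;=\; \sum_{j=1}^M (\sqrt{\lambda_j}-1)^2.\]
Being $\epsilon$-nearly Parseval gives $\lambda_j \in [1-\epsilon, 1+\epsilon]$. Writing $(\sqrt{\lambda}-1)^2 = (\lambda-1)^2/(\sqrt{\lambda}+1)^2$ makes clear that the maximum on this interval occurs at the left endpoint (the numerator is equal at the two endpoints, while the denominator is smaller at $\lambda = 1-\epsilon$), yielding the per-eigenvalue bound $(1-\sqrt{1-\epsilon})^2 = 2 - \epsilon - 2\sqrt{1-\epsilon}$. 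Summing over $j$ gives $d(\cF,\cG) \le M(2 - \epsilon - 2\sqrt{1-\epsilon})$, and the further estimate $\le M\epsilon^2/4$ follows by factoring $(1-\sqrt{1-\epsilon})^2 = \epsilon^2/(1+\sqrt{1-\epsilon})^2$ and bounding the denominator from below.

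For the near equal-norm bounds on $\cG$, I would use $\|S^{-1/2}f_i\|^2 = \langle S^{-1}f_i, f_i\rangle$. The spectral condition $(1-\epsilon)I \le S \le (1+\epsilon)I$ gives by functional calculus $(1+\epsilon)^{-1}I \le S^{-1} \le (1-\epsilon)^{-1}I$, hence
\[\frac{\|f_i\|^2}{1+\epsilon} \;\le\; \|S^{-1/2}f_i\|^2 \;\le\; \frac{\|f_i\|^2}{1-\epsilon}.\]
Combining with the $\epsilon$-nearly equal norm hypothesis $(1-\epsilon)\tfrac{M}{N} \le \|f_i\|^2 \le (1+\epsilon)\tfrac{M}{N}$ produces a sandwich bound that implies the stated one.

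The only real subtlety is the endpoint comparison for $(\sqrt{\lambda}-1)^2$ on $[1-\epsilon,1+\epsilon]$ (and the mild algebraic manipulation needed to pass from $2-\epsilon - 2\sqrt{1-\epsilon}$ to the cleaner form $\epsilon^2/4$); everything else is routine functional calculus and trace arithmetic.
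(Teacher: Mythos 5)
Your main computation is sound, and it is in fact the standard argument (the paper does not prove this proposition itself but defers to \cite{BC}, where the proof is the same spectral/trace computation): $d(\cF,\cG)=\tr\bigl((I-S^{-1/2})^2S\bigr)=\sum_{j=1}^M(\sqrt{\lambda_j}-1)^2$, and since $\lambda\mapsto(\sqrt{\lambda}-1)^2$ is convex with minimum at $\lambda=1$, its maximum on $[1-\epsilon,1+\epsilon]$ is attained at an endpoint, and your endpoint comparison correctly identifies $\lambda=1-\epsilon$, giving $d(\cF,\cG)\le M(2-\epsilon-2\sqrt{1-\epsilon})$. The norm estimate via $(1+\epsilon)^{-1}I\le S^{-1}\le(1-\epsilon)^{-1}I$ is also fine, with the caveat that it uses the $\epsilon$-nearly equal norm hypothesis on $\cF$, which the proposition does not state explicitly but clearly intends (without it the second conclusion cannot hold); your sandwich $\frac{1-\epsilon}{1+\epsilon}\frac{M}{N}\le\|S^{-1/2}f_i\|^2\le\frac{1+\epsilon}{1-\epsilon}\frac{M}{N}$ is stronger than, and implies, the stated bounds.

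The genuine gap is your last step. You factor $(1-\sqrt{1-\epsilon})^2=\epsilon^2/(1+\sqrt{1-\epsilon})^2$ and claim the bound $\le\epsilon^2/4$ follows ``by bounding the denominator from below.'' But $1+\sqrt{1-\epsilon}\le 2$, so $(1+\sqrt{1-\epsilon})^2\le 4$; the lower bound you would need, $(1+\sqrt{1-\epsilon})^2\ge 4$, holds only at $\epsilon=0$. In fact $2-\epsilon-2\sqrt{1-\epsilon}=\epsilon^2/(1+\sqrt{1-\epsilon})^2\ge\epsilon^2/4$ for all $\epsilon\in(0,1]$ (at $\epsilon=1/2$ the left side is $\approx 0.0858$ while $\epsilon^2/4=0.0625$), so the second inequality displayed in the proposition is actually reversed as an exact statement, and no argument can establish it. What your factorization honestly yields is $d(\cF,\cG)\le M\epsilon^2/(1+\sqrt{1-\epsilon})^2\le M\epsilon^2$, with $M\epsilon^2/4$ only the asymptotic value of the bound as $\epsilon\to 0$. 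So the defect lies in the step you glossed over rather than in your main argument: you should either keep the sharp bound $M(2-\epsilon-2\sqrt{1-\epsilon})$, replace the final estimate by $M\epsilon^2$, or present $M\epsilon^2/4$ explicitly as a small-$\epsilon$ approximation rather than an inequality.
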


It is known \cite{B,CK,J} that the canonical Parseval frame is the closest Parseval frame
(with the distance function above) to a given frame.  It is also known
that this constant is best possible in general.  So we are not giving up
anything by working with a simpler variation of the Paulsen Problem.

\begin{problem}[Parseval Paulsen Problem]
Find the function $f(\epsilon,M,N)$ so that whenever $\{f_i\}_{i=1}^N$ is an
$\epsilon$-nearly equal norm Parseval frame, then there is an equal norm
Parseval frame ${\mathcal G}$ so that
\[ d({\mathcal F},{\mathcal G}) \le f(\epsilon,M,N).\]
\end{problem}

Finally, we recall a fundamental result in frame theory -  the classification theorem for 
Parseval frames \cite{Chr,HL} - which will be used extensively here.

\begin{theorem}
A family $\{f_i\}_{i=1}^N$ is a Parseval frame for $\cH_M$ if and only if
the analysis operator $T$ for the frame is a co-isometry satisfying:
\[ Tf_i = Pe_i, \mbox{ for all } i=1,2,\ldots,N.,\]
where $\{e_i\}_{i=1}^N$ is the natural orthonormal basis of $\ell_2(N)$ and
$P$ is the orthogonal projection of $\ell_2(N)$ onto $T(\cH_M)$.
\end{theorem}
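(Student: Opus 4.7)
The plan is to build the proof around the elementary identity $T^*e_i = f_i$, which is immediate from the definition of the synthesis operator given in the preceding subsection. Under this identity, the proposed formula $Tf_i = Pe_i$ is literally the statement $TT^*e_i = Pe_i$, so the theorem reduces to the standard equivalence between the Parseval condition $T^*T = I$ and $TT^*$ being the orthogonal projection of $\ell_2(N)$ onto its range $T(\cH_M)$, provided one also notes that a frame's analysis operator is automatically injective (since the lower frame bound is equivalent to the $f_i$'s spanning $\cH_M$).

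For the forward direction, I would assume the frame is Parseval, so that $S = T^*T = I$ on $\cH_M$, and then set $P := TT^*$. A short computation shows $P^* = P$ and
\[ P^2 = TT^*TT^* = T(T^*T)T^* = TT^* = P, \]
and the range of $P$ is clearly equal to $T(\cH_M)$, so $P$ is the orthogonal projection of $\ell_2(N)$ onto $T(\cH_M)$. Plugging $T^*e_i = f_i$ into the definition then yields $Tf_i = TT^*e_i = Pe_i$ immediately.

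For the converse, I would start from the stated representation $Tf_i = Pe_i$, substitute $f_i = T^*e_i$ to obtain $TT^*e_i = Pe_i$ for every $i$, and conclude $TT^* = P$ because $\{e_i\}_{i=1}^N$ is an orthonormal basis of $\ell_2(N)$. The projection identity $(TT^*)^2 = TT^*$ then rearranges to $T(T^*T - I)T^* = 0$. Since $\{f_i\}$ spans $\cH_M$ the analysis operator $T$ is injective and $T^*$ is surjective onto $\cH_M$; writing an arbitrary $f \in \cH_M$ as $T^*u$ and applying injectivity of $T$ forces $T^*T - I = 0$, i.e.\ the frame is Parseval. The only delicate step is this final one: it is precisely the lower frame bound, through injectivity of $T$, that upgrades the projection relation $(TT^*)^2 = TT^*$ to the full operator identity $T^*T = I$; without it one would only recover the weaker fact that $T$ is a partial isometry.
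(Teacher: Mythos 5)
Your proof is correct. Note, however, that the paper itself does not prove this statement: it is quoted as the known classification theorem for Parseval frames from \cite{Chr,HL}, so there is no internal argument to compare against; your reduction via the identity $T^*e_i=f_i$ to the standard equivalence between $T^*T=I$ and $TT^*$ being the orthogonal projection onto $T(\cH_M)$ is exactly the classical argument of those sources. Two small remarks. First, ``co-isometry'' in the statement is the paper's loose usage: for a Parseval frame the analysis operator satisfies $T^*T=I$, so $T$ is an isometry and it is the synthesis operator $T^*$ that is a co-isometry in the literal sense; your proof implicitly makes the correct interpretation, namely that this condition encodes the Parseval identity. Second, you are right to single out the converse as the only delicate point: the relation $Tf_i=Pe_i$ by itself is satisfied, for example, by the zero family ($T=0$, $P=0$), so one must either use the standing hypothesis that $\{f_i\}_{i=1}^N$ is a frame --- injectivity of $T$ and surjectivity of $T^*$, as you do, to upgrade $T(T^*T-I)T^*=0$ to $T^*T=I$ --- or simply invoke the co-isometry hypothesis, under which the Parseval property is immediate; your write-up handles this correctly.
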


\section{Preliminary Results}

Let us first outline the proof of the equivalence of  the Paulsen Problem and the Projection
Problem.  This will explain the results we develop in this section. 

First we will
assume that the Parseval Paulsen Problem function $f(\epsilon,M,N)$ is given
and let $P$ be a rank $M$ projection on $\ell_2(N)$ with 
$\epsilon$-nearly constant
diagonal.  We need to find a constant diagonal projection whose distance to
$P$ is on the order of $f(\epsilon,M,N)$.  To do this, we consider ${\mathcal F}=\{Pe_i\}_{i=1}^N$
a nearly equal norm Parseval frame for $\cH_M$.  It follows that there is
a equal norm Parseval frame ${\mathcal G}=\{g_i\}_{i=1}^N$ for $\cH_M$ with
\[ d({\mathcal F},{\mathcal G}) \le f(\epsilon,M,N).\]
Letting $T_1$ be the analysis operator for $\mathcal G$, we have the existence of
a projection $Q$ on $\ell_2(N)$ so that
\[ T_1g_i = Qe_i,\mbox{ for all } i=1,2,\ldots,N.\]
So it is the problem of finding $d(P,Q)$ we will address in this section.

Conversely, if we assume the Projection Problem function $g(\epsilon,M,N)$ is given,
we choose a nearly equal norm Parseval frame ${\mathcal F}=\{f_i\}_{i=1}^N$
with analysis operator $T:\cH_M \rightarrow \ell_2(N)$ a co-isometry and satisfying
\[ Tf_i = Pe_i,\mbox{ for all } i=1,2,\ldots,N.\]
We need to find the closest equal norm Parseval frame to $\mathcal F$.
By our assumption, $P$ is a projection with nearly constant diagonal.  By the Projection Problem,
there is a projection $Q$ on $\ell_2(N)$ with $d(P,Q) \le g(\epsilon,M,N)$.  It follows
that $\{Qe_i\}_{i=1}^N$ is a equal norm Parseval frame.   We will be done if we can
find an equal norm Parseval frame ${\mathcal G}=\{g_i\}_{i=1}^N$ for $\cH_M$ with analysis
operator  $T_1$ satisfying:
\begin{equation}\label{eqn1}
T_1g_i = Qe_i,\mbox{ and } d({\mathcal F},{\mathcal G}) \approx g(\epsilon,M,N).
\end{equation}
So it is the problem of finding $\mathcal G$ we address in this section.  This problem is
made more difficult by the fact that there are many frames $G$ satisfying Equation
\ref{eqn1} and most of them are not close to 
$\mathcal F$.  In particular, if ${\mathcal G}=\{g_i\}_{i=1}^N$ satisfies Equation \ref{eqn1},
and $U$ is any unitary operator on $\cH_M$, then $U({\mathcal G}) = \{Ug_i\}_{i=1}^N$
also satisfies Equation \ref{eqn1}.  To address this problem, we will introduce the
{\it chordal distance} between subspaces of a Hilbert space and give a computation
of this distance in terms of our distance function.  Using this, we will be able to construct
the required frame $\mathcal G$.

We need a result
from \cite{BC} and for completeness include its proof.

\begin{theorem}\label{T3}
Let ${\mathcal F}=\{f_i\}_{i\in I},{\mathcal G}=\{g_i\}_{i\in I}$ be Parseval frames for $\cH$
with analysis operators $T_1,T_2$ respectively.  
If
\[ d({\mathcal F},{\mathcal G}) = \sum_{i\in I}\|f_i-g_i\|^2 < \epsilon,\]
then
\[ d(T_1({\mathcal F}),T_2({\mathcal G})) =
\sum_{i\in I}\|T_1f_i-T_2g_i\|^2 < 4\epsilon.\]
\end{theorem}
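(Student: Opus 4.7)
The plan is to split $T_1f_i - T_2g_i$ into a piece that can be handled by isometry and a piece that can be handled by the Parseval property, and then apply the elementary inequality $(a+b)^2 \le 2a^2 + 2b^2$.

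First I would write
\[
T_1f_i - T_2g_i = (T_1f_i - T_1g_i) + (T_1g_i - T_2g_i),
\]
so that
\[
\|T_1f_i - T_2g_i\|^2 \le 2\|T_1f_i - T_1g_i\|^2 + 2\|T_1g_i - T_2g_i\|^2.
\]
The first term is easy: since $\mathcal{F}$ is Parseval, $T_1$ is an isometry, so $\|T_1f_i - T_1g_i\| = \|f_i - g_i\|$. Summing over $i$ gives $\sum_i \|T_1f_i - T_1g_i\|^2 = d(\mathcal{F},\mathcal{G}) < \epsilon$.

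For the second term I would expand in coordinates: $T_1g_i - T_2g_i = \sum_j \langle g_i, f_j - g_j\rangle e_j$, so
\[
\|T_1g_i - T_2g_i\|^2 = \sum_j |\langle g_i, f_j - g_j\rangle|^2.
\]
Now swap the order of summation and apply Parseval for $\mathcal{G}$ in the inner sum:
\[
\sum_i \|T_1g_i - T_2g_i\|^2 = \sum_j \sum_i |\langle f_j - g_j, g_i\rangle|^2 = \sum_j \|f_j - g_j\|^2 < \epsilon.
\]
Combining the two estimates yields $d(T_1(\mathcal{F}), T_2(\mathcal{G})) < 2\epsilon + 2\epsilon = 4\epsilon$, as required.

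There is no real obstacle here; the only non-obvious step is recognizing that the bilinear cross term $T_1g_i - T_2g_i$ is controllable because its coordinate expansion pairs $g_i$ with $f_j-g_j$, letting Parseval for $\mathcal{G}$ absorb the sum over $i$ and return exactly $\|f_j-g_j\|^2$. The factor of $4$ comes from the single use of $(a+b)^2 \le 2a^2+2b^2$, which is presumably what the referenced constant in the statement of Theorem \ref{T3} is tracking.
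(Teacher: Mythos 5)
Your proof is correct and is essentially the paper's argument in a slightly repackaged form: writing $T_1f_i-T_2g_i=T_1(f_i-g_i)+(T_1-T_2)g_i$ and using that $T_1$ is an isometry is, in coordinates, exactly the paper's splitting of $\langle f_j,f_i\rangle-\langle g_j,g_i\rangle$ into two cross terms, each absorbed by the Parseval identity applied to the differences $f_j-g_j$. The factor $4$ arises in both via the same elementary bound $|a+b|^2\le 2|a|^2+2|b|^2$, so nothing further is needed.
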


\begin{proof}
Note that for all $j\in I$,
\[ T_1f_j = \sum_{i\in I}\langle f_j,f_i\rangle e_i,\mbox{  and  } 
 T_2g_j = \sum_{i\in I}\langle g_j,g_i\rangle e_i.\]
Hence,
\begin{eqnarray*}
\|T_1f_j - T_2g_j\|^2 &=& \sum_{i\in I} |\langle f_j,f_i\rangle - \langle 
g_j,g_i\rangle |^2\\
&=& \sum_{i\in I}|\langle f_j,f_i-g_i\rangle + \langle f_j-g_j,g_i
\rangle |^2\\
&\le& 2 \sum_{i\in I}|\langle f_j,f_i-g_i\rangle |^2 + 2\sum_{i\in I}
|\langle f_j-g_j,g_i\rangle|^2.
\end{eqnarray*}
Summing over $j$ and using the fact that our frames $\mathcal F$ and $\mathcal G$
are Parseval gives
\begin{eqnarray*}
\sum_{j\in I}\|T_1f_j-T_2g_j\|^2 &\le& 2 \sum_{j\in I}\sum_{i\in I}
|\langle f_j,f_i-g_i\rangle|^2 + 2\sum_{j\in I}\sum_{i\in I}
|\langle f_j-g_j,g_i\rangle|^2\\
&=& 2\sum_{i\in I}\sum_{j\in I}|\langle f_j,f_i-g_i\rangle|^2
+2\sum_{j\in I}\|f_j-g_j\|^2\\
&=& 2\sum_{i\in I}\|f_i-g_i\|^2 + 2\sum_{j\in I}\|f_j-g_j\|^2\\
&=&4\sum_{j\in I}\|f_j-g_j\|^2.
\end{eqnarray*}
\end{proof}

 As we noted above,  $d(T_1({\mathcal F}),T_2({\mathcal G}))$
need not be bounded by $d({\mathcal F},{\mathcal G})$ in general. 
We now show that there is at least one choice of $\mathcal G$ which
gives the correct bound.  For this, we need to introduce 
{\it principle angles} and the {\it chordal distance}
between subspaces of a Hilbert space.  For notation, if $\cH$ is a Hilbert space,
denote the unit sphere by $Sp_{\cH}$.

\begin{definition}
Given $M$-dimensional subspaces $W_1,W_2$ of a Hilbert space, define the
$M$-tuple $(\sigma_1,\sigma_2,\ldots,\sigma_M)$ as follows:
\[ \sigma_1 = max\{\langle f,g\rangle: f\in Sp_{W_1},\ g\in Sp_{W_2}\}= \langle f_1,g_1\rangle.\]
For $2\le i \le M$,
\[ \sigma_i = max \{\langle f,g\rangle:\|f\|=\|g\|=1,\ \langle f_j,f\rangle = 0=\langle g_j,g\rangle,
\mbox{ for } 1\le j\le i-1\},\]
where
\[\sigma_i= \langle f_i,g_i\rangle.
\]
\end{definition}

The $M$-tuple $(\theta_1,\theta_2,\ldots, \theta_M)$ with $\theta_i = cos^{-1}(\sigma_i)$
is called the {\it principle angles} between $W_1,W_2$.  The {\it chordal distance}
between $W_1,W_2$ is given by
\[ d_c^2(W_1,W_2) = \sum_{i=1}^M sin^2\theta_i.\]
So by the definition, there exists orthonormal bases $\{a_j\}_{j=1}^M, \ \{b_j\}_{j=1}^M$
for $W_1,W_2$ respectively satisfying
\[ \|a_j-b_j\| = 2sin\ \left (\frac{\theta}{2}\right ),\mbox{ for all } j=1,2,\ldots,M.\]
It follows that for $0\le \theta \le \frac{\pi}{2}$,
\[ sin^2\theta \le 4 sin^2\ \left ( \frac{\theta}{2}\right )= \|a_j-b_j\|^2  \le 4 sin^2 \theta,
\mbox{ for all } j=1,2,\ldots,M.\] 
Hence,
\begin{equation}\label{E17}
 d_c^2(W_1,W_2) \le
\sum_{j=1}^M \|a_j-b_j\|^2 \le 4 d_c^2(W_1,W_2).
\end{equation}

We also need the following result \cite{CHS}.

\begin{lemma}
If $\cH_N$ is an $N$-dimensional Hilbert space and $P,Q$ are rank $M$
orthogonal projections onto subspaces $W_1,W_2$ respectively, then the 
chordal distance $d_c(W_1,W_2)$ between the subspaces satisfies
\[ d_c^2(W_1,W_2) = M - Tr\ PQ.\]
\end{lemma}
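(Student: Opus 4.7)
The plan is to identify the numbers $\sigma_i = \cos\theta_i$ with the singular values of an explicit operator, and then evaluate $\operatorname{Tr}(PQ)$ via those singular values. Specifically, I would consider the restricted operator $A = P\big|_{W_2}\colon W_2\to W_1$, whose adjoint is $A^* = Q\big|_{W_1}\colon W_1\to W_2$. For any $f\in W_1$ and $g\in W_2$ we have $\langle f,g\rangle=\langle f,Pg\rangle=\langle f,Ag\rangle$, so the iterative max definition in the statement is exactly the Courant--Fischer/Ky Fan characterization of the singular values of $A$. Consequently $\sigma_1\ge\sigma_2\ge\cdots\ge\sigma_M$ are precisely the singular values of $A$, with $f_i,g_i$ associated left/right singular vectors.

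With this identification in hand, the computation is short. The positive operator $A^*A = QPQ\big|_{W_2}$ has eigenvalues $\sigma_i^2 = \cos^2\theta_i$, so
\[ \operatorname{Tr}_{W_2}(A^*A) = \sum_{i=1}^M \cos^2\theta_i. \]
Since $QPQ$ annihilates $W_2^\perp$, its trace on all of $\cH_N$ equals its trace on $W_2$. Applying the cyclic property of the trace together with $Q^2=Q$ gives
\[ \operatorname{Tr}(PQ) = \operatorname{Tr}(PQ^2) = \operatorname{Tr}(QPQ) = \sum_{i=1}^M\cos^2\theta_i. \]
Subtracting from $M$ yields
\[ M - \operatorname{Tr}(PQ) = \sum_{i=1}^M\bigl(1-\cos^2\theta_i\bigr) = \sum_{i=1}^M \sin^2\theta_i = d_c^2(W_1,W_2), \]
which is the claim.

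The main obstacle is the first step: rigorously matching the recursive ``constrained maximum'' definition of the $\sigma_i$ with the ordered singular values of $A = P\big|_{W_2}$. When the top singular value of $A$ is simple this is immediate, and in general one proceeds by induction: after extracting the top pair $(f_1,g_1)$, one passes to the deflated subspaces $W_1\cap\{f_1\}^\perp$ and $W_2\cap\{g_1\}^\perp$, on which the same maximum formula produces $\sigma_2$, and so on. Everything after this identification is purely arithmetic with the trace and uses only $P^2=P$, $Q^2=Q$ and cyclicity.
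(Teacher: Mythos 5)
Your argument is correct. There is, however, no in-paper proof to compare it with: the lemma is imported as a black box from \cite{CHS}, and the only computation the paper carries out in this direction is the one in Proposition \ref{prop3}, which takes the trace identity for granted. Your route --- identifying the recursively defined $\sigma_i=\cos\theta_i$ with the singular values of $A=P\vert_{W_2}\colon W_2\to W_1$, noting $A^{*}=Q\vert_{W_1}$, and then computing $\operatorname{Tr}(PQ)=\operatorname{Tr}(QPQ)=\operatorname{Tr}_{W_2}(A^{*}A)=\sum_{i}\cos^{2}\theta_i$ --- is the standard proof and would make the paper self-contained at this point. Two details are worth pinning down in the step you flag as the main obstacle: first, to run the deflation you need that any maximizing pair $(f_1,g_1)$ is genuinely a singular pair, i.e.\ $Ag_1=\sigma_1 f_1$ and $A^{*}f_1=\sigma_1 g_1$ (a short Cauchy--Schwarz argument, with the trivial case $\sigma_1=0$ handled separately); this is exactly what guarantees that $A$ maps $W_2\cap\{g_1\}^{\perp}$ into $W_1\cap\{f_1\}^{\perp}$, so the constrained maximum at stage $i$ is the top singular value of the restricted operator and the induction closes. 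Second, in a complex Hilbert space the maximum of $\langle f,g\rangle$ in the paper's definition should be read as the maximum of $\vert\langle f,g\rangle\vert$ (replace $f$ by a unimodular multiple), after which your identification with singular values, and hence the identity $d_c^{2}(W_1,W_2)=M-\operatorname{Tr}(PQ)$, goes through verbatim.
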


Next we give the precise connection between chordal distance for
subspaces and the
distance between the projections onto these subspaces.  This result can
be found in \cite{CHS} in the language of Hilbert-Schmidt norms.  We give our
own proof for the sake of completeness.

\begin{proposition}\label{prop3}
Let $\cH_N$ be an $N$-dimensional Hilbert space 
with orthonormal basis $\{e_i\}_{i=1}^N$.   Let $P,Q$ be the
orthogonal projections of $\cH_N$ onto $M$-dimensional subspaces
$W_1,W_2$ respectively.  Then the chordal distance between $W_1,W_2$ satisfies
\[ d_c^2(W_1,W_2) = \frac{1}{2}\sum_{i=1}^N\|Pe_i-Qe_i\|^2.\]
In particular, there are orthonormal bases $\{g_i\}_{i=1}^M$ for $W_1$
and $\{h_i\}_{i=1}^M$ for $W_2$ satisfying
\[ \frac{1}{2} \sum_{i=1}^N \|Pe_i-Qe_i\|^2 \le
\sum_{i=1}^M \|g_i-h_i\|^2 \le 2\sum_{i=1}^N \|Pe_i-Qe_i\|^2.\]
\end{proposition}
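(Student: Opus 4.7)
The plan is to prove the identity by a direct trace computation, and then obtain the two-sided inequality by composing the identity with the bounds in Equation~(\ref{E17}) applied to the principal-angle bases of $W_1$ and $W_2$.

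First I would expand, for each $i$, the squared norm
\[
\|Pe_i - Qe_i\|^2 = \|Pe_i\|^2 + \|Qe_i\|^2 - 2\,\mathrm{Re}\langle Pe_i,Qe_i\rangle,
\]
and then use $P^2 = P = P^*$ and $Q^2 = Q = Q^*$ to rewrite $\|Pe_i\|^2 = \langle Pe_i,e_i\rangle$, $\|Qe_i\|^2 = \langle Qe_i,e_i\rangle$, and $\langle Pe_i,Qe_i\rangle = \langle QPe_i,e_i\rangle$. Summing over the orthonormal basis $\{e_i\}_{i=1}^N$ converts each of the three sums into a trace:
\[
\sum_{i=1}^N \|Pe_i\|^2 = \tr P = M, \quad \sum_{i=1}^N \|Qe_i\|^2 = \tr Q = M, \quad \sum_{i=1}^N \langle Pe_i,Qe_i\rangle = \tr(PQ).
\]
Combining these gives $\sum_{i=1}^N \|Pe_i - Qe_i\|^2 = 2M - 2\,\tr(PQ) = 2(M - \tr(PQ))$. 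The preceding lemma (from \cite{CHS}) identifies $M - \tr(PQ)$ with $d_c^2(W_1,W_2)$, so the first claimed identity
\[
d_c^2(W_1,W_2) = \tfrac{1}{2}\sum_{i=1}^N \|Pe_i - Qe_i\|^2
\]
follows at once.

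For the second assertion I would invoke the orthonormal bases $\{a_j\}_{j=1}^M$ of $W_1$ and $\{b_j\}_{j=1}^M$ of $W_2$ furnished by the principal-angle construction, for which the excerpt has already established the two-sided bound~(\ref{E17}):
\[
d_c^2(W_1,W_2) \le \sum_{j=1}^M \|a_j-b_j\|^2 \le 4\, d_c^2(W_1,W_2).
\]
Setting $g_j = a_j$ and $h_j = b_j$ and substituting the identity proved in the first step converts this chain into
\[
\tfrac{1}{2}\sum_{i=1}^N \|Pe_i-Qe_i\|^2 \le \sum_{j=1}^M \|g_j-h_j\|^2 \le 2\sum_{i=1}^N \|Pe_i-Qe_i\|^2,
\]
which is exactly the desired inequality.

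There is no real obstacle here: the identity is a one-line trace calculation once one knows to collapse $\|Pe_i\|^2$, $\|Qe_i\|^2$, and $\langle Pe_i,Qe_i\rangle$ to $\langle Pe_i,e_i\rangle$, $\langle Qe_i,e_i\rangle$, and $\langle QPe_i,e_i\rangle$, and the inequality is a formal consequence of (\ref{E17}) and the prior lemma. The only subtlety worth flagging is that the bases $\{g_j\},\{h_j\}$ are not canonical — they must be the principal-angle bases — so the statement asserts existence rather than uniqueness.
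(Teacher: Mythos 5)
Your proof is correct and follows essentially the same route as the paper: the same expansion of $\sum_i\|Pe_i-Qe_i\|^2$ into traces, the identification $2M-2\,\tr(PQ)=2d_c^2(W_1,W_2)$ via the lemma from \cite{CHS}, and the two-sided bound obtained by combining this identity with Equation \ref{E17} for the principal-angle bases. No issues to flag.
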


\begin{proof}
We compute:
\begin{eqnarray*}
 \sum_{i=1}^N\|Pe_i-Qe_i\|^2
&=& \sum_{i=1}^N \langle Pe_i-Qe_i,Pe_i-Qe_i\rangle\\
&=& \sum_{i=1}^N\|Pe_i\|^2 + \sum_{i=1}^N\|Qe_i\|^2 - 2 \sum_{i=1}^N\langle Pe_i,
Qe_i\rangle\\
&=& 2M -2\sum_{i=1}^N\langle PQe_i,e_i\rangle\\
&=& 2M -2 Tr\ PQ\\
&=& 2M - 2[M-d_c^2(W_1,W_2)]\\
&=& 2 d_c^2(W_1,W_2).
\end{eqnarray*}
This combined with Equation \ref{E17} completes the proof.
\end{proof}

Now we are ready to answer the second problem we need to address in this section.

\begin{theorem}\label{thm1}
Let $P,Q$ be projections of rank $M$ on $\cH_N$ and let $\{e_i\}_{i=1}^N$
be the natural orthonormal basis of $\cH_N$.  Further assume that there is
a Parseval frame $\{f_i\}_{i=1}^N$ for $\cH_M$ with analysis operator 
$T$ satisfying $Tf_i = Pe_i$, for all $i=1,2,\ldots,N$.   If 
\[ \sum_{i=1}^M \|Pe_i-Qe_i\|^2 < \epsilon,\]
then there is a Parseval frame $\{g_i\}_{i=1}^N$
for $\cH_M$ with analysis operator $T_1$ satisfying
\[ T_1g_i = Qe_i, \mbox{ for all }i=1,2,\ldots,N,\]
and 
\[ \sum_{i=1}^N \|f_i-g_i\|^2 <2\epsilon.
\]
Moreover, if $\{Qe_i\}_{i=1}^N$ is equal norm, then $\{g_i\}_{i=1}^N$ may be
chosen to be equal norm.
\end{theorem}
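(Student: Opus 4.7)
The plan is to build the desired analysis operator $T_1$ explicitly by transporting an orthonormal basis across the two projection ranges. The key observation is that once we have an isometry $T_1:\cH_M\to\ell_2(N)$ whose range equals $W_2=Q(\ell_2(N))$, the Parseval frame $g_i:=T_1^*e_i$ automatically satisfies $T_1g_i=T_1T_1^*e_i=Qe_i$. So the task reduces to choosing this isometry so that $T_1^*e_i$ is close to $T^*e_i=f_i$ for each $i$ (the identity $f_i=T^*e_i$ follows from $Tf_i=Pe_i$ together with $T^*P=T^*TT^*=T^*$).

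First, I would invoke Proposition \ref{prop3} to obtain orthonormal bases $\{a_k\}_{k=1}^M$ of $W_1=P(\ell_2(N))$ and $\{b_k\}_{k=1}^M$ of $W_2=Q(\ell_2(N))$ satisfying
\[
\sum_{k=1}^{M}\|a_k-b_k\|^2\ \le\ 2\sum_{i=1}^{N}\|Pe_i-Qe_i\|^2\ <\ 2\epsilon.
\]
Since $T^*$ restricts to an isometry from $W_1$ onto $\cH_M$, the vectors $u_k:=T^*a_k$ form an orthonormal basis of $\cH_M$. I then define $T_1:\cH_M\to\ell_2(N)$ as the unique linear map with $T_1u_k=b_k$. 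The orthonormality of $\{b_k\}$ makes $T_1$ an isometry, and $T_1(\cH_M)=\mathrm{span}\{b_k\}=W_2$, so $T_1T_1^*=Q$ as required. Setting $g_i:=T_1^*e_i$ produces a Parseval frame whose analysis operator is $T_1$ and which satisfies $T_1g_i=Qe_i$.

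Next, I would estimate the frame distance by expanding in the orthonormal basis $\{u_k\}$. For each $i$,
\[
\|f_i-g_i\|^2\ =\ \|T^*e_i-T_1^*e_i\|^2\ =\ \sum_{k=1}^M|\langle e_i,\,Tu_k-T_1u_k\rangle|^2,
\]
and summing over $i$ and interchanging sums gives
\[
\sum_{i=1}^N\|f_i-g_i\|^2\ =\ \sum_{k=1}^M\|Tu_k-T_1u_k\|^2\ =\ \sum_{k=1}^M\|a_k-b_k\|^2\ <\ 2\epsilon,
\]
using $Tu_k=TT^*a_k=Pa_k=a_k$ and $T_1u_k=b_k$.

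For the equal-norm addendum, the construction already gives $\|g_i\|^2=\|T_1^*e_i\|^2=\langle T_1T_1^*e_i,e_i\rangle=\|Qe_i\|^2$, so if $\{Qe_i\}$ is equal norm, $\{g_i\}$ automatically is. The main obstacle is conceptual rather than computational: arbitrary isometries $T_1$ with range $W_2$ would produce frames $\{g_i\}$ that can be arbitrarily far from $\{f_i\}$ (differing by a unitary of $\cH_M$), so the crucial step is using the chordal-distance optimal bases of Proposition \ref{prop3} to pin down the correct unitary identification between $W_1$ and $W_2$ before pulling back through $T^*$.
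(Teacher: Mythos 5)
Your proof is correct and follows essentially the same route as the paper: both arguments hinge on Proposition \ref{prop3} to produce the close orthonormal bases $\{a_k\}$, $\{b_k\}$ of the two ranges, and then use the specific unitary identification of $\cH_M$ with $W_1$ coming from $T$ (the paper does this via the rows of the matrices $A,B$ and a unitary $U$ with $Uf_i'=f_i$; you do it by setting $u_k=T^*a_k$ and defining $T_1u_k=b_k$). Your operator-theoretic phrasing in fact makes the verification that $T_1g_i=Qe_i$ and the equal-norm claim slightly more explicit than the paper's, but the underlying construction and estimate are identical.
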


\begin{proof}
By Proposition \ref{prop3}, there are orthonormal bases
$\{a_j\}_{j=1}^M$ and $\{b_j\}_{j=1}^M$ for $W_1,W_2$ respectively satisfying
\[ \sum_{j=1}^M \|a_j-b_j\|^2 < 2 \epsilon.\]
Let $A,B$ be the $N \times M$ matrices whose $j^{th}$ columns are
$a_j,b_j$ respectively.  Let $a_{ij},b_{ij}$ be the $(i,j)$ entry of $A,B$ respectively.
Finally, let $\{f^{'}_i\}_{i=1}^N,\{g_i^{'}\}_{i=1}^N$ be the $i^{th}$ rows of $A,B$
respectively.  Then we have
\begin{eqnarray*}
\sum_{i=1}^N \|f_i^{'} - g_i^{'}\|^2 &=&
\sum_{i=1}^N \sum_{j=1}^M |a_{ij}-b_{ij}|^2 \\
&=& \sum_{j=1}^M \sum_{i=1}^N |a_{ij}-b_{ij}|^2\\
&=& \sum_{i=1}^M \|a_j-b_j\|^2\\
&\le& 2\epsilon.
\end{eqnarray*}
Since the columns of $A$ form an orthonormal basis for $W_1$, we know that 
$\{f_i^{'}\}_{i=1}^N$ is a Parseval frame which is isomorphic to $\{f_i\}_{i=1}^N$.
Thus there is a unitary operator $U:\cH_M \rightarrow \cH_M$ with
$Uf_i^{'} = f_i$.  Now let $\{g_i\}_{i=1}^N = \{Ug_i^{'}\}_{i=1}^N$.  Then
\[ \sum_{i=1}^N \|f_i-Ug_i^{'}\|^2 = \sum_{i=1}^N \|U(f_i^{'})-U(g_i^{'})\|^2
=\sum_{i=1}^N \|f_i^{'}-g_i^{'}\|^2\le 2\epsilon. \]
Finally, if $T_1$ is the analysis operator for the Parseval frame $\{g_i\}_{i=1}^N$, then
$T_1$ is a co-isometry and since
$\{T_1(g_i)\}_{i=1}^N  = \{Qe_i\}_{i=1}^N$, for all $i=1,2,\ldots,N$, if $Qe_i$ is equal norm,
so is $T_1(g_i)$ and hence so is $\{g_i\}_{i=1}^N$.
\end{proof}

\section{The Equivalence of our Problems}

Now we can show that the Paulsen Problem and the Projection Problem are
equivalent in the sense that their functions $f(\epsilon,M,N),\ g(\epsilon,M,N)$,
respectively, are equal up to a factor of two.

\begin{theorem}\label{thm2}
If $g(\epsilon,M,N)$ is the function for the Paulsen Problem and $f(\epsilon,M,N)$
is the function for the Projection Problem, then
\[       f(\epsilon,M,N) \le            4 g(\epsilon,M,N) \le  8f(\epsilon,M,N).
\] 
\end{theorem}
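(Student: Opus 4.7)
The plan is to prove the double inequality $f \le 4g \le 8f$ by splitting it into two independent reductions between Problem \ref{prob1} and the Parseval Paulsen Problem, using the classification theorem to translate back and forth. The quantitative cost on one side will be paid by Theorem \ref{T3} (the factor of $4$), and on the other side by Theorem \ref{thm1} (the factor of $2$); all of the genuinely new machinery has already been developed in Section 3, so the argument here is assembly.

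For the inequality $f(\epsilon,M,N) \le 4g(\epsilon,M,N)$: start with a rank $M$ projection $P$ on $\ell_2(N)$ satisfying the $\epsilon$-nearly constant diagonal hypothesis, and view $\cF = \{Pe_i\}_{i=1}^N$ as a Parseval frame for $W_1 := P(\ell_2(N))$; by the classification theorem this frame has analysis operator $T_0 \colon W_1 \hookrightarrow \ell_2(N)$ equal to the inclusion, and it is $\epsilon$-nearly equal norm since $\|Pe_i\|^2$ is $\epsilon$-close to $M/N$. Invoking the Paulsen function produces an equal norm Parseval frame $\cG = \{g_i\}_{i=1}^N$ for $W_1$ with $d(\cF,\cG) \le g(\epsilon,M,N)$. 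Let $T_1$ denote the analysis operator of $\cG$ and set $Q = T_1T_1^*$; the classification theorem gives $T_1g_i = Qe_i$, and the equal norm condition together with $T_1$ being an isometry gives $\|Qe_i\|^2 = \|g_i\|^2 = M/N$, so $Q$ has the required constant diagonal. Theorem \ref{T3} applied to $\cF$ and $\cG$ then yields
\[ \sum_{i=1}^N \|Pe_i - Qe_i\|^2 = \sum_{i=1}^N \|T_0(Pe_i) - T_1(g_i)\|^2 \le 4\, d(\cF,\cG) \le 4g(\epsilon,M,N). \]

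For the inequality $g(\epsilon,M,N) \le 2f(\epsilon,M,N)$ (which is the same as $4g \le 8f$): start with an $\epsilon$-nearly equal norm Parseval frame $\cF = \{f_i\}_{i=1}^N$ for $\cH_M$ with analysis operator $T$, and let $P$ be the projection onto $T(\cH_M)$. Then $Tf_i = Pe_i$ and, because $T$ is an isometry, $\|Pe_i\|^2 = \|f_i\|^2$, so $P$ has $\epsilon$-nearly constant diagonal. The Projection function delivers a constant diagonal rank $M$ projection $Q$ with $\sum_i \|Pe_i - Qe_i\|^2 \le f(\epsilon,M,N)$. At this point one would naively hope to read off an equal norm Parseval frame near $\cF$, but the relation $T_1g_i = Qe_i$ determines the frame only up to a unitary on $\cH_M$, and a generic unitary rotation is far from $\cF$; this is exactly the obstacle, and it is precisely what Theorem \ref{thm1} resolves by means of the principal angles / chordal distance identity of Proposition \ref{prop3}. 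Applying Theorem \ref{thm1} (with $\epsilon$ replaced by $f(\epsilon,M,N)$) produces an equal norm Parseval frame $\cG$ with $d(\cF,\cG) \le 2f(\epsilon,M,N)$. Combining the two reductions gives the claimed chain $f \le 4g \le 8f$.
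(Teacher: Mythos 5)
Your proposal is correct and follows essentially the same route as the paper: one direction applies the Paulsen function to $\{Pe_i\}$ and uses Theorem \ref{T3} to pay the factor $4$, the other applies the Projection function to the range projection of the analysis operator and uses Theorem \ref{thm1} (via the chordal distance machinery) to pay the factor $2$. The only difference is cosmetic — you make explicit that the analysis operator of $\{Pe_i\}$ is the inclusion of $W_1$ and that $\|Pe_i\|=\|f_i\|$ since the analysis operator is isometric, points the paper leaves implicit.
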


\begin{proof}
First, assume that Problem \ref{prob1} holds with function $f(\epsilon,M,N)$.
Let $\{f_i\}_{i=1}^N$ be a Parseval frame for $\cH_M$ satisfying
\[ (1-\epsilon)\frac{M}{N} \le \|f_i\|^2 \le (1+\epsilon)\frac{M}{N}.\]
Let $T$ be the analysis operator of $\{f_i\}_{i=1}^N$ and let $P$
be the projection of $\cH_N$ onto range $T$.  So, $Tf_i = Pe_i$, for all
$i=1,2,\ldots,N$. By our assumption that that Problem \ref{prob1}
holds, there is a projection $Q$ on $\cH_N$ with constant diagonal so that
\[ \sum_{i=1}^N \|Pe_i-Qe_i\|^2 \le f(\epsilon,M,N).
\]
By Theorem \ref{thm1}, there is a a Parseval frame $\{g_i\}_{i=1}^N$ for $\cH_M$
with analysis operator $T_1$ so that $T_1g_i = Qe_i$ and
\[ \sum_{i=1}^N \|f_i-g_i\|^2 \le 2 f(\epsilon,M,N).
\]
Since $T_1$ is a co-isometry and $\{T_1g_i\}_{i=1}^N$ is equal norm, it follows
that $\{g_i\}_{i=1}^N$ is an equal norm Parseval frame satisfying the Paulsen problem.

Conversely, assume the Parseval Paulsen problem has a positive solution
with function $g(\epsilon,M,N)$.  Let $P$ be
an orthogonal projection on $\cH_N$ satisfying
\[ (1-\epsilon)\frac{M}{N} \le \|Pe_i\|^2 \le (1+\epsilon)\frac{M}{N}.\]
Then $\{Pe_i\}_{i=1}^N$ is a Parseval frame for $\cH_M$ and by the Parseval Paulsen
problem, there is an equal norm Parseval frame
$\{g_i\}_{i=1}^N$ so that
\[ \sum_{i=1}^N\|f_i-g_i\|^2 < g(\epsilon,M,N).\]
Let $T_1$ be the analysis operator of $\{g_i\}_{i=1}^N$.
Letting $Q$ be the projection onto the range of $T_1$, we have that
$Qe_i= T_1g_i$, for all $i=1,2,\ldots,N$.
By Theorem \ref{T3}, we have that
\[ \sum_{i=1}^N\|Pe_i-T_1g_i\|^2= \sum_{i=1}^N\|Pe_i-Qe_i\|^2  \le 
4g(\epsilon,M,N).\]
Since $T_1$ is a co-isometry and $\{g_i\}_{i=1}^N$ is equal norm, it follows
that $Q$ is a constant diagonal projection.
\end{proof}

  \section{Generalizations of the Paulsen Problem}
  
In this section
 we will look at some recent generalizations of the Paulsen Problem.

\begin{definition}
We say a sequence of numbers $\{a_i\}_{i=1}^N$ is a {\bf Parseval 
admissible sequence}
for $\cH_M$ if there is a Parseval frame $\{f_i\}_{i=1}^N$ for $\cH_M$ 
satisfying
$\|f_i\|^2 = a_i^2$, for all $i=1,2,\ldots,N$.
\end{definition}

The following classification of Parseval
admissible sequences can be found in \cite{CFLKT}.

\begin{theorem}\label{thm100}
A sequence of numbers $\{a_i\}_{i=1}^N$ is a Parseval admissible sequence
for $\cH_M$ if and only if both of the following hold:

1.  $\sum_{i=1}^N a_i^2 = M$.

(2)  $a_i \le 1$, for every $i=1,2,\ldots,N$.
\end{theorem}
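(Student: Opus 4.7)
The plan splits into necessity and sufficiency. For necessity, suppose $\{f_i\}_{i=1}^N$ is a Parseval frame for $\cH_M$ with $\|f_i\|^2=a_i^2$. Summing squared norms against the identity frame operator gives
\[
\sum_{i=1}^N a_i^2 \;=\; \sum_{i=1}^N \|f_i\|^2 \;=\; \tr(S) \;=\; \tr(I_M) \;=\; M,
\]
which is (1). For (2), I would invoke the classification theorem stated at the end of Section 2: the analysis operator $T$ is a co-isometry with $Tf_i = Pe_i$, where $P$ projects onto $T(\cH_M)$. Since $T$ is an isometry on $\cH_M$,
\[
a_i \;=\; \|f_i\| \;=\; \|Tf_i\| \;=\; \|Pe_i\| \;\le\; \|e_i\| \;=\; 1.
\]

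For sufficiency, my strategy is to reduce to a statement about diagonals of projections. By the classification theorem, producing a Parseval frame $\{f_i\}_{i=1}^N$ for $\cH_M$ with $\|f_i\|^2 = a_i^2$ is equivalent to producing a rank-$M$ orthogonal projection $P$ on $\ell_2(N)$ whose diagonal entries with respect to $\{e_i\}_{i=1}^N$ are exactly $a_1^2,\ldots,a_N^2$: indeed, if such a $P$ exists, then $\{Pe_i\}_{i=1}^N$ is a Parseval frame for the $M$-dimensional space $P(\ell_2(N))$, and any unitary identification with $\cH_M$ transports it to the desired frame. So the whole problem reduces to realizing the sequence $(a_i^2)$ as the diagonal of a rank-$M$ projection.

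This reduction is precisely the setting of Kadison's characterization of projection diagonals, which is cited in the introduction and is a special case of Schur--Horn: the achievable diagonals of a rank-$M$ projection on $\cH_N$ are exactly the vectors in $[0,1]^N$ majorized by the eigenvalue vector consisting of $M$ ones and $N-M$ zeros. Thus I only need to verify that, under hypotheses (1) and (2), the sequence $(a_i^2)$ is majorized by this vector. Sorting the $a_i^2$ in decreasing order as $a_{(1)}^2\ge\cdots\ge a_{(N)}^2$, I would note that for $k\le M$ the partial sum $\sum_{j=1}^k a_{(j)}^2 \le k$ by hypothesis (2), while for $k>M$ the partial sum is bounded by the total $\sum_{j=1}^N a_j^2=M$ from hypothesis (1). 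Both partial-sum inequalities together with equality of totals give majorization, so Kadison's theorem supplies the required projection.

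The conceptual obstacle is the sufficiency direction, specifically the construction of a projection with a prescribed diagonal; this is a genuinely nontrivial fact, but the work has already been done by Kadison, and invoking it is the cleanest route. An alternative, self-contained proof would proceed by induction on $N$, using a rank-one update argument to peel off a coordinate at a time while preserving the majorization inequalities — essentially re-deriving the two-eigenvalue case of Schur--Horn — but this adds length without new content given the references the paper is already relying on.
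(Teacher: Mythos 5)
Your proof is correct, but it takes a different route from the paper, which in fact offers no proof at all of this statement: it simply cites \cite{CFLKT}, where the existence of Parseval (tight) frames with prescribed norms is established by an explicit construction. Your necessity argument (trace of the frame operator for (1), and $a_i=\|f_i\|=\|Tf_i\|=\|Pe_i\|\le 1$ for (2)) matches the standard facts the paper records in Section 2. For sufficiency you instead pass through the equivalence, already implicit in the paper's use of the classification theorem, between Parseval frames with norms $a_i$ and rank-$M$ projections with diagonal $(a_i^2)$, and then invoke Kadison's characterization of projection diagonals (the finite-rank Schur--Horn case, cited as \cite{K1}); your majorization check -- partial sums at most $\min(k,M)$, total equal to $M$ -- is exactly right, since (2) gives the bound $k$ for $k\le M$ and (1) gives the bound $M$ otherwise. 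Note also that the paper's own later theorem on $S$-admissible sequences (attributed to \cite{HJ} and \cite{CL}) specializes at $S=I_M$ to precisely your majorization conditions, so your argument is essentially the specialization of that result, whereas the cited proof in \cite{CFLKT} is constructive and produces the frame explicitly; your route is shorter but leans on the heavier diagonal/Schur--Horn machinery, while the constructive route is self-contained and yields concrete frames. Either way the statement is fully justified, so there is no gap.
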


Now we give a generalization of the Paulsen Problem.

\begin{problem}[Generalized Paulsen Problem]
Let $\{a_i\}_{i=1}^N$ be a Parseval admissible sequence for 
$\cH_M$.  If $\{f_i\}_{i=1}^N$ is a Parseval frame for $\cH_M$ satisfying
\[ (1-\epsilon)a_i^2 \le \|f_i\|^2 \le (1+\epsilon)a_i^2,\]
find the closest Parseval frame $\{g_i\}_{i=1}^N$ 
satisfying:  $\|g_i\| = a_i$,
for all $i=1,2,\ldots,N$.
\end{problem}

The work in this paper can be re-done to show that the Generalized Paulsen
Problem is equivalent to a Generalized Projection Problem.

\begin{problem}[Generalized Projection Problem]
If $P$ is a rank $M$ orthogonal projection on $\ell_2(N)$,
$\{a_i\}_{i=1}^N$ satisfies Theorem \ref{thm100} and
\[ (1-\epsilon)a_i^2 \le \|Pe_i\|^2 \le (1+\epsilon)a_i^2,\]
find the closest projection $Q$ to $P$ satisfying $\|Qe_i\| = a_i$, for
all $i=1,2,\ldots,N$.
\end{problem}

We end with a further generalization of the Paulsen Problem to frame operators.

\begin{definition}
If $S$ is a positive, self-adjoint invertible operator on $\cH_M$, we say that a 
sequence of numbers $\{a_i\}_{i=1}^N$ is an $S${\bf -admissible sequence} if there exists
a frame $\{f_i\}_{i=1}^N$ for $\cH_M$ having $S$ as its frame operator and
so that $\|f_i\|^2 = a_i^2$, for all $i=1,2,\ldots,N$.
\end{definition}

The classification of $S$-admissible sequences goes back to Horn and Johnson \cite{HJ}.
The simplest proof of this result is due Casazza and Leon \cite{CL}.

\begin{theorem} 
Let $ S $ be a positive self-adjoint operator on a
$ N$-dimensional Hilbert space $H_N$. Let
$ \lambda_1 \ge \lambda_2 \ge \dots \lambda_N>0 $ 
be the eigenvalues of $S$. Fix $ M \ge N $ and real numbers 
$ a_1 \ge a_2 \ge \cdots \ge a_M > 0 $. The following are
equivalent:
\begin{itemize}
\item[(1)] There is a frame $ \{ \varphi_j \}_{j=1}^{M} $
for $H_N$ with frame operator $S$ and 
$ \norm{\varphi_j}=a_j $, for all
$j=1,2,\dots,M$.
\item[(2)] For every $ 1 \le k \le N $,
\begin{align} 
\sum_{i=1}^k a_i^2 \le
\sum_{i=1}^k \lambda_i ,\ \ \ \text{and} \ \ \
\sum_{i=1}^M a_i^2 =
\sum_{i=1}^N \lambda_i .
\end{align}
\end{itemize}
\end{theorem}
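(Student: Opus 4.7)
The plan is to prove the two implications separately. For \textbf{(1) $\Rightarrow$ (2)}, I would let $T \colon H_N \to \ell_2(M)$ be the analysis operator of $\{\varphi_j\}_{j=1}^M$, so $T^*T = S$. Then $TT^*$ is an $M \times M$ positive operator which shares the non-zero spectrum of $S$, namely $\lambda_1,\ldots,\lambda_N$, together with $M-N$ additional zero eigenvalues. Its $j$-th diagonal entry in the standard basis of $\ell_2(M)$ equals $\|T^* e_j\|^2 = \|\varphi_j\|^2 = a_j^2$. The classical Schur--Horn theorem then says that the diagonal list $(a_1^2,\ldots,a_M^2)$ is majorized by the eigenvalue list $(\lambda_1,\ldots,\lambda_N,0,\ldots,0)$; since the $a_j^2$ are already in decreasing order and the trailing eigenvalues vanish, this is exactly condition (2), with the trace equality coming from $\tr(S) = \tr(TT^*)$.

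For \textbf{(2) $\Rightarrow$ (1)}, I would induct on $M$. The base case $M = N$ is again the classical Schur--Horn theorem: diagonalize $S$ in an orthonormal eigenbasis, then apply a unitary conjugation to produce a self-adjoint matrix with spectrum $\{\lambda_i\}$ and prescribed diagonal $(a_1^2,\ldots,a_N^2)$, whose columns form the desired frame. For the inductive step $M > N$, the strategy is to peel off a single frame vector $\varphi_M$ of norm $a_M$ so that $S' := S - \varphi_M \varphi_M^*$ remains positive, has rank $N$, and its eigenvalues $\lambda_1' \ge \cdots \ge \lambda_N'$ together with $(a_1,\ldots,a_{M-1})$ still satisfy condition (2). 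The inductive hypothesis then delivers a frame $\{\varphi_j\}_{j=1}^{M-1}$ with frame operator $S'$ and prescribed norms, and appending $\varphi_M$ produces the frame for $S$.

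The construction of $\varphi_M$ is the heart of the argument. Letting $u_1,\ldots,u_N$ be an orthonormal eigenbasis for $S$, I would take $\varphi_M = \alpha u_p + \beta u_q$ for a strategically chosen pair with $\lambda_p$ near the top and $\lambda_q$ near the bottom of the spectrum, subject to $|\alpha|^2 + |\beta|^2 = a_M^2$. The perturbation $S'$ then agrees with $S$ on the orthogonal complement of $\mathrm{span}(u_p, u_q)$, so only two eigenvalues change, and the new pair is obtained as the roots of an explicit $2 \times 2$ characteristic polynomial whose coefficients depend continuously on the ratio $|\alpha|/|\beta|$. An intermediate-value argument then lets one tune $\alpha,\beta$ so that, after reordering, the resulting spectrum maintains every partial-sum inequality against $(a_1,\ldots,a_{M-1})$.

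The main obstacle is precisely this preservation of the majorization: when the two altered eigenvalues migrate to new positions in the sorted list, a partial sum $\sum_{i=1}^k \lambda_i'$ can shift in non-obvious ways, and one must verify the inequality for every $1 \le k \le N$. This typically requires a case split according to whether $k$ separates the two perturbed indices and how $a_M$ compares to the gap $\lambda_p - \lambda_q$; the correct pair $(p,q)$ must be identified using the hypothesis (2) for $S$ itself to see that such a choice is always available. Once these inequalities are secured, the trace equality for $S'$ is automatic from $\tr(S') = \tr(S) - a_M^2 = \sum_{i=1}^{M-1} a_i^2$, and the induction closes.
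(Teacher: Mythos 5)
You should first note that the paper does not actually prove this theorem: it is quoted as a known classification, attributed to Horn--Johnson \cite{HJ} with ``the simplest proof'' credited to Casazza--Leon \cite{CL}, so there is no in-paper argument to compare against; your proposal has to stand on its own. Your direction (1) $\Rightarrow$ (2) does: passing to $TT^*$, identifying its diagonal with $\|\varphi_j\|^2=a_j^2$ and its spectrum with $(\lambda_1,\dots,\lambda_N,0,\dots,0)$, and invoking Schur's half of Schur--Horn plus $\tr(T^*T)=\tr(TT^*)$ is complete and correct (the partial sums for $k>N$ follow from the trace equality, as you implicitly use).

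The gap is in (2) $\Rightarrow$ (1). The entire difficulty of the theorem is concentrated in the inductive step you describe but do not carry out: you must exhibit, for every admissible $(\lambda,a)$, a pair $(p,q)$ and coefficients $\alpha,\beta$ with $|\alpha|^2+|\beta|^2=a_M^2$ such that $S'=S-\varphi_M\varphi_M^*$ stays positive \emph{definite} (it must be a frame operator for $M-1\ge N$ vectors) and such that every reordered partial sum $\sum_{i=1}^k\lambda_i'$ still dominates $\sum_{i=1}^k a_i^2$. By interlacing, $\lambda_i'\le\lambda_i$, so each inequality gets strictly harder after peeling; showing a valid choice always exists is precisely Horn's converse in the padded setting (and is the content of the Casazza--Leon construction), and phrases like ``the correct pair must be identified'' and ``once these inequalities are secured'' defer exactly that. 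Your base case also has a small slip: a self-adjoint matrix $B$ with spectrum $\{\lambda_i\}$ and diagonal $(a_i^2)$ does not have the desired frame as its \emph{columns}; you need to factor it, e.g.\ take $\varphi_j=B^{1/2}e_j$, and then conjugate by a unitary so the frame operator equals $S$ itself rather than merely having the same spectrum. A cheaper way to close everything, since you already invoke classical Schur--Horn: apply it once at size $M\times M$ to the eigenvalue list $(\lambda_1,\dots,\lambda_N,0,\dots,0)$ and target diagonal $(a_1^2,\dots,a_M^2)$ (the needed majorization is exactly (2) plus the trace identity), obtain a positive semidefinite $G$ of rank $N$ with that diagonal, realize $G$ as the Gram matrix of $M$ vectors in $H_N$, and rotate so their frame operator is $S$; this bypasses the induction and the two-eigenvector perturbation analysis entirely.
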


Our final generalization of the Paulsen Problem is:

\begin{problem}
If $S$ is a positive, self-adjoint invertible operator on $\cH_M$, $\{a_i\}_{i=1}^N$ is
an $S$-admissible sequence, and $\{f_i\}_{i=1}^N$ is a frame with frame operator $S$
and satisfying
\[ (1-\epsilon)a_i \le \|f_i\|^2 \le (1+\epsilon)a_i,\]
then find the closest frame $\{g_i\}_{i=1}^N$ so that $\|g_i\|^2 = a_i$, for all
$i=1,2,\ldots,N$.
\end{problem}

\section{The Paulsen Problem and Naimark Complements}

In this section we will use Naimark complements to show that we only
need to solve the Paulsen problem for $N \le 2M$.   If $\{f_i\}_{i=1}^N$ is a Parseval
frame for $\cH_M$ with analysis operator $T$ which is a co-isometry and satisfies
\[ Tf_i = Pe_i,\mbox{ for all }i=1,2,\ldots,N,\]
where $\{e_i\}_{i=1}^N$ is the natural orthonormal basis for $\ell_2(N)$ and
$P$ is the orthogonal projection of $\ell_2(N)$ onto $T(\cH_M)$, the
{\it Naimark complement} of $\{f_i\}_{i=1}^M$ is the Parseval frame
$\{(I-P)e_i\}_{i=1}^N$ for $\cH_{N-M}$.   Now we will compare the Paulsen
function for a Parseval frame to the Paulsen function for its Naimark complement.

\begin{theorem}
If $g(\epsilon,M,N)$ is the Paulsen constant then
\[ g(\epsilon,M,N) \le 8g(\epsilon \frac{M}{N-M},N-M,N).\]
\end{theorem}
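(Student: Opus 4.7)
The plan is to exploit the duality between a Parseval frame and its Naimark complement: if the frame is nearly equal norm, so is its complement (with a rescaled tolerance), and we can pull the result back through the projection correspondence.

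First, let $\{f_i\}_{i=1}^N$ be an $\epsilon$-nearly equal norm Parseval frame for $\cH_M$ with analysis operator $T$ and associated projection $P$ on $\ell_2(N)$, so $Tf_i = Pe_i$. Since $\|Pe_i\|^2 = \|f_i\|^2$ and $\|(I-P)e_i\|^2 = 1 - \|Pe_i\|^2$, a direct computation converts the bound $(1-\epsilon)M/N \le \|Pe_i\|^2 \le (1+\epsilon)M/N$ into
\[
\Bigl(1 - \epsilon\tfrac{M}{N-M}\Bigr)\tfrac{N-M}{N} \le \|(I-P)e_i\|^2 \le \Bigl(1 + \epsilon\tfrac{M}{N-M}\Bigr)\tfrac{N-M}{N}.
\]
Thus the Naimark complement $\{(I-P)e_i\}_{i=1}^N$ is an $\epsilon' := \epsilon \tfrac{M}{N-M}$-nearly equal norm Parseval frame for $\cH_{N-M}$.

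Next, apply the Paulsen function hypothesis at parameters $(\epsilon', N-M, N)$ to produce an equal norm Parseval frame $\{h_i\}_{i=1}^N$ for $\cH_{N-M}$ with
\[
\sum_{i=1}^N \|(I-P)e_i - h_i\|^2 \le g\!\left(\epsilon\tfrac{M}{N-M},N-M,N\right).
\]
Let $T_1$ be the analysis operator of $\{h_i\}$ and $Q'$ the orthogonal projection of $\ell_2(N)$ onto its range, so $T_1 h_i = Q'e_i$. Applying Theorem \ref{T3} to the two Parseval frames $\{(I-P)e_i\}$ and $\{h_i\}$ (whose own analysis operators are $I-P$ and $T_1$ respectively) gives
\[
\sum_{i=1}^N \|(I-P)e_i - Q'e_i\|^2 \le 4\, g\!\left(\epsilon\tfrac{M}{N-M},N-M,N\right).
\]

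Now pivot back to $\cH_M$. Write $\tilde Q := I - Q'$; this is a rank $M$ projection, and since $\{h_i\}$ is equal norm with $\|h_i\|^2 = (N-M)/N$, the isometry property of $T_1$ yields $\|Q'e_i\|^2 = (N-M)/N$, hence $\|\tilde Q e_i\|^2 = M/N$ for every $i$. Thus $\tilde Q$ is a constant diagonal projection of rank $M$, and the identity $\|(I-P)e_i - Q'e_i\|^2 = \|Pe_i - \tilde Q e_i\|^2$ yields
\[
\sum_{i=1}^N \|Pe_i - \tilde Q e_i\|^2 \le 4\, g\!\left(\epsilon\tfrac{M}{N-M},N-M,N\right).
\]
Finally, by Theorem \ref{thm1} applied to $P$ and $\tilde Q$, there is an equal norm Parseval frame $\{g_i\}_{i=1}^N$ for $\cH_M$ with $\sum_{i=1}^N \|f_i - g_i\|^2 \le 2 \sum_{i=1}^N \|Pe_i - \tilde Qe_i\|^2$, which gives the stated bound $8\, g(\epsilon M/(N-M), N-M, N)$.

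The main thing to watch is consistency of constants through the three steps (a factor of $4$ from Theorem \ref{T3} and a factor of $2$ from Theorem \ref{thm1}), together with the bookkeeping that turns the near-equality in $\|f_i\|^2$ into the rescaled near-equality $\epsilon M/(N-M)$ for the Naimark complement. No step should be delicate beyond that; the essential geometric content is that the Naimark complement correspondence swaps rank $M$ for rank $N-M$ but leaves the space of diagonal sequences unchanged (modulo the translation $x \mapsto 1-x$), which is exactly what sharpens the tolerance by the factor $M/(N-M)$.
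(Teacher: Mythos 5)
Your proof is correct and follows essentially the same route as the paper: rescale the near-equal-norm bound for the Naimark complement to $\epsilon M/(N-M)$, apply the Paulsen hypothesis in dimension $N-M$, transfer to the projections via Theorem \ref{T3} (factor $4$), pass back through $P \mapsto I-P$, and invoke Theorem \ref{thm1} (factor $2$) to recover an equal norm Parseval frame in $\cH_M$, giving the constant $8$. The only cosmetic difference is that you apply the Paulsen hypothesis directly to $\{(I-P)e_i\}$ viewed as a frame for the range of $I-P$, while the paper first pulls it back to an abstract frame in $\cH_{N-M}$ through its analysis operator; the substance and the bookkeeping of constants are identical.
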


\begin{proof}
Assume that $\cF=\{f_i\}_{i=1}^N$ is a $\epsilon$-nearly equal norm Parseval frame
for $\cH_N$ with analysis operator $T$ which is a co-isometry.
Then there is a  projection $P$ on $\ell_2(N)$ so that $Pe_i = Tf_i$, for all
$i=1,2,\ldots,N$.  It follows that $\{(I-P)e_i\}_{i=1}^N$ is a Parseval frame and
\begin{eqnarray*}
\|(I-P)e_i\|^2 &=& 1 - \|Pe_i\|^2\\
&\le& 1 - (1-\epsilon)\frac{M}{N}\\
&=& \left ( 1+\epsilon \frac{M}{N-M}\right )\left (1-\frac{M}{N}\right ).
\end{eqnarray*}
Similarly,
\[ \|(I-P)e_i\|^2 \ge \left ( 1-\epsilon \frac{M}{N-M}\right ) \left ( 1-\frac{M}{N}\right ).
\]
Choose a Parseval frame $\{g_i\}_{i=1}^N$ for $\cH_{N-M}$ with analysis operator
$T_1$ satisfying $T_1g_i = (I-P)e_i$.  Since $T_1$ is a co-isometry, it follows that
$\cG = \{g_i\}_{i=1}^N$ is a $\epsilon \frac{M}{N-M}$-nearly equal norm Parseval frame.
Hence, there is an equal norm Parseval frame $\cH=\{h_i\}_{i=1}^N$ for $\cH_{N-M}$
with
\[ d(\cG,\cH) \le g(\epsilon \frac{M}{N-M},N-M,N),\]
where $g$ is the Paulsen function for $N$ vectors in $\cH_{N-M}$.
Let $T_2$ be the analysis operator for $\cH$.  
Applying Theorem \ref{T3}, we have that  
\[ d(T_1(\cG),T_2(\cH)) \le 4 g(\epsilon \frac{M}{N-M},N-M,N).
\]
Let $I-Q$ be the orthogonal projection onto $T_2(\cH)$.  Now we check
\begin{eqnarray*}
d(\{Pe_i\}_{i=1}^N,\{Qe_i\}_{i=1}^N) &=& \sum_{i=1}^N\|Pe_i-Qe_i\|^2\\
&=& \sum_{i=1}^N \|(I-P)e_i - (I-Q)e_i\|^2\\
&=& d(T_1(\cG),T_2(\cH))\\
&\le& 4 g(\epsilon \frac{M}{N-M},N-M,N).
\end{eqnarray*}
By Theorem \ref{thm1}, we can choose a equal norm Parseval frame 
${\mathcal K}=\{k_i\}_{i=1}^N$ for $\cH_{N-M}$ with analysis
operator $T_3$ satisfying $T_3k_i = Qe_i$, for all $i=1,2,\ldots,N$
 and
\[ d(\cF,{\mathcal K}) \le 8g(\epsilon \frac{M}{N-M},N-M,N).\]
\end{proof}

Given $N\ge M$, then either $N\le 2M$ or $N \le 2(N-M)$.  So we have

\begin{corollary}
To solve the Paulsen problem, it suffices to solve it for Parseval frames
$\{f_i\}_{i=1}^N$ for $\cH_M$ with $N\le 2M$.
\end{corollary}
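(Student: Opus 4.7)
The corollary is essentially a pigeonhole observation combined with the Naimark complement inequality from the preceding theorem. The plan is to dichotomize on whether $N\le 2M$ or $N>2M$, and reduce the second case to the first by passing to the Naimark complement.

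First I would record the elementary arithmetic fact: if $N\ge M$ and $N>2M$, then $N-M>M$, so $2(N-M)=2N-2M\ge N$, i.e.\ $N\le 2(N-M)$. Thus for any Parseval frame $\{f_i\}_{i=1}^N$ in $\cH_M$, either the frame is already in the regime $N\le 2M$, or its Naimark complement, which lives in $\cH_{N-M}$ and still has $N$ vectors, satisfies $N\le 2(N-M)$ and so is in the regime we are assuming we can handle.

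The second step is to invoke the preceding theorem, which gives
\[ g(\epsilon, M, N) \le 8\, g\!\left(\epsilon \tfrac{M}{N-M},\, N-M,\, N\right). \]
If the Paulsen problem has been solved (with an explicit bound) for all Parseval frames satisfying $N\le 2K$ in dimension $K$, then in particular the right-hand side is controlled when we set $K=N-M$, since $N\le 2(N-M)$ in the case $N>2M$. Combining the two cases yields a bound on $g(\epsilon,M,N)$ for all $N\ge M$.

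There is no real obstacle here, since the corollary is purely a bookkeeping consequence of the theorem; the only thing to double-check is the trivial inequality $N\le 2(N-M)$ in the case $N>2M$, together with the fact that the distortion factor $\epsilon M/(N-M)$ and the multiplicative factor $8$ do not spoil the reduction (they simply mean that the general Paulsen bound one obtains from the reduction will be a universal constant times the bound in the regime $N\le 2M$, with $\epsilon$ rescaled).
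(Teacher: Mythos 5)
Your proposal is correct and follows exactly the paper's argument: the paper's proof is the same dichotomy (``either $N\le 2M$ or $N\le 2(N-M)$'') combined with the preceding theorem's bound $g(\epsilon,M,N)\le 8\,g\bigl(\epsilon\tfrac{M}{N-M},N-M,N\bigr)$ for the Naimark complement. Your additional remarks about the rescaled $\epsilon$ and the factor $8$ just make explicit what the paper leaves implicit.
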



\end{document}